\newtheorem*{theorem*}{Theorem}
\newcommand\III[1]{\mathbb{1}_{[#1]}}
\newcommand\st{\,;\;}
\newcommand\dd{\mathrm {d}}
\newcommand\all[1]{\forall #1\enspace}
\DeclareMathOperator{\Tr}{\mathrm{Tr}}
\DeclareMathOperator{\bP}{\mathbf{P}\mathopen{}}
\DeclareMathOperator{\E}{\mathbf{E}\mathopen{}}
\newcommand\Psub [1]{\bP_{\! #1}}
\newcommand\Esubbig [2]{\E_{#1}\mkern-1.5mu\bigl[#2\bigr]}
\newcommand\EsubBig [2]{\E_{#1}\mkern-1.5mu\Bigl[#2\Bigr]}
\newcommand\Esubbigg [2]{\E_{#1}\mkern-1.5mu\biggl[#2\biggr]}
\newcommand\dfn[1]{\textit{\textbf{#1}}}
\newcommand\R{{\mathbb R}}
\newcommand\N{{\mathbb N}}
\newcommand\cbuldot{{\raise.25ex\hbox{$\scriptscriptstyle\bullet$}}}
\begin{document}

%\section{Introduction}

The classification of isolated singular points of a 1-dimensional SDE
driven by Brownian motion is
complete and exhibits several types of behavior: see
\cite[Fig.~2.2]{Cherny} for a good summary. For example, as has long been
known, %e.g., Breiman's book
if $X$ is a (weak)
solution to $E_x(\sigma, 0)$ with $\sigma^{-2}$ being nonzero and locally
integrable in some interval $(0, a]$ and $x \in (0, a)$, then the
probability that $X_t$ ever reaches 0 is positive (i.e., 0 is \dfn{accessible})
iff $\int_0^a y\, \sigma(y)^{-2} \,\dd y < \infty$.  Much less is known in
higher dimensions. In particular, the following theorem that makes the
usual assumption of Lipschitz coefficients seems to be new:

\begin{theorem*} 
Let $d, m \in \N^+$. Let $B = (B^{(1)}, \dots, B^{(m)})$ be $m$-dimensional
Brownian motion.
Let $\sigma \colon \R^d \to M_{d \times m}(\R)$ and $b \colon \R^d \to
\R^d$ be Lipschitz. Write 
$$
\Lambda := \{x \in \R^d \st \sigma(x) = 0,\, b(x) = 0\}.
$$
Suppose that 
$X$ solves $E_x(\sigma, b)$, i.e.,
$$
X_t = x + \int_0^t \sigma(X_s)\, \dd B_s + \int_0^t b(X_s)\, \dd s \quad (t
\ge 0).
$$
If $x \notin \Lambda$, then
$$
\bP[\all {t \ge 0} X_t \notin \Lambda] = 1.
$$
In other words, the set $\Lambda$ is inaccessible.
\end{theorem*}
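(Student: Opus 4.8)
The plan is to reduce the statement to a quantitative, target-uniform one-point hitting estimate, and then to chain that estimate over a fine net of $\Lambda$; this sidesteps the fact that $y \mapsto \mathrm{dist}(y,\Lambda)$ is typically neither $C^2$ nor even semiconvex near $\Lambda$, so that a direct It\^o--Tanaka analysis of $\log\mathrm{dist}(X_t,\Lambda)$ is unavailable. We may assume $\Lambda \neq \emptyset$, since otherwise there is nothing to prove.

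First I would fix $T < \infty$. Since $t \mapsto \mathrm{dist}(X_t,\Lambda)$ is continuous and $\Lambda$ is closed, the event $\{\exists\, t \le T :\; X_t \in \Lambda\}$ equals $\{\inf_{t \le T}\mathrm{dist}(X_t,\Lambda) = 0\}$, so it suffices to show $\lim_{\delta \downarrow 0}\bP[\inf_{t \le T}\mathrm{dist}(X_t,\Lambda) \le \delta] = 0$ for every $T$, and then take a union over $T \in \N$. The engine is the following one-point bound. Fix $a \in \Lambda$ and $0 < \delta' < |x - a|$, put $S := \inf\{t : |X_t - a| \le \delta'\}$, and apply It\^o's formula to $g(y) := \log|y - a|$ along $X$ stopped at $S$, so the path stays in the region where $g$ is smooth. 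Writing $\alpha := \sigma\sigma^{\top}$, the drift of $g(X_s)$ is the sum of $(X_s - a)\cdot b(X_s)\,|X_s - a|^{-2}$, $\tfrac12\,\Tr\alpha(X_s)\,|X_s - a|^{-2}$ and $-(X_s - a)^{\top}\alpha(X_s)(X_s - a)\,|X_s - a|^{-4}$, while the martingale part $M^a$ has bracket density $(X_s - a)^{\top}\alpha(X_s)(X_s - a)\,|X_s - a|^{-4}$. Because $a \in \Lambda$ and $\sigma, b$ are Lipschitz, $|b(X_s)| \le L|X_s - a|$ and $\|\sigma(X_s)\| \le L|X_s - a|$ on $[0,S]$, so every one of these quantities is bounded by a constant $C = C(d,m,L)$. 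Hence, on $\{S \le T\}$, It\^o's formula gives $\log\delta' \ge \log|x - a| - CT + M^a_S$, and the standard exponential inequality for the continuous martingale $-M^a_{\cdot\wedge S}$, whose bracket is at most $CT$, yields
\[ \bP[S \le T] \ \le\ \exp\!\Bigl(-\bigl(\log(|x - a|/\delta') - CT\bigr)_+^{2}\big/(2CT)\Bigr). \]
The point is that this depends on $a$ only through $|x - a|$, which is always at least $\rho_0 := \mathrm{dist}(x,\Lambda) > 0$.

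Next I would chain. Fix $R > 0$ and choose a maximal $\delta$-separated subset $\{a_1,\dots,a_k\}$ of $\Lambda \cap \{y : |y - x| \le R\}$; a packing bound gives $k \le (3R/\delta)^d$, and maximality makes it a $\delta$-net. On the event $\{\sup_{t \le T}|X_t - x| \le R/2\}$, if $\mathrm{dist}(X_{t_0},\Lambda) \le \delta$ for some $t_0 \le T$, then the point of $\Lambda$ nearest $X_{t_0}$ lies in $\{y : |y - x| \le R\}$ and within $\delta$ of some $a_i$, so $\inf_{t \le T}|X_t - a_i| \le 2\delta$. A union bound and the one-point estimate (applied with $\delta' = 2\delta$ and $|x - a_i| \ge \rho_0$) therefore give, for all small $\delta$,
\[ \bP\bigl[\,\inf_{t \le T}\mathrm{dist}(X_t,\Lambda) \le \delta\,\bigr] \ \le\ \bP\bigl[\,\sup_{t \le T}|X_t - x| > R/2\,\bigr] + (3R/\delta)^d\exp\!\Bigl(-\bigl(\log(\rho_0/(2\delta)) - CT\bigr)_+^{2}\big/(2CT)\Bigr). \]
Since the exponential factor decays faster than any power of $\delta$, the second term tends to $0$ as $\delta \downarrow 0$ for each fixed $R$; and since $X$ has continuous paths, $\bP[\sup_{t \le T}|X_t - x| > R/2] \to 0$ as $R \to \infty$. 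Letting $\delta \downarrow 0$ and then $R \to \infty$ kills the right-hand side, finishing the proof.

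The real obstacle is the passage from ``$X$ a.s.\ never hits a given point of $\Lambda$'' --- which the logarithmic computation delivers at once and which is reminiscent, in any dimension, of the classical one-dimensional picture recalled above --- to ``$X$ a.s.\ never hits $\Lambda$'', since $\Lambda$ may be an arbitrary closed set and the distance function to it is too rough for a direct It\^o formula. The chaining handles this at the cost of needing the one-point bound to be (i)~uniform in the target --- guaranteed because the Lipschitz hypothesis bounds $\sigma$ and $b$ by $L|X_s - a|$ simultaneously for \emph{every} $a \in \Lambda$ --- and (ii)~Gaussian enough to beat the polynomial size $(3R/\delta)^d$ of the net, which holds because $\log(|x - a|/\delta')$ is of order $\log(1/\delta)$, making the exponent quadratic in $\log(1/\delta)$. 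The remaining ingredients --- path continuity of $X$, the exponential martingale inequality, and the packing estimate --- are routine.
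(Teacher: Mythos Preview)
Your proof is correct and takes a genuinely different route from the paper's. The paper avoids both It\^o's formula and any covering argument by working with the scalar process $f(X_t):=\|\sigma(X_t)\|^2+\|b(X_t)\|^2$, which vanishes exactly on $\Lambda$: setting $T_A:=\inf\{t:f(X_t)=A\}$, an elementary second-moment estimate together with the Lipschitz bound gives $\Psub x[T_{A/2^{k\pm1}}\le t]\le C\sqrt t$ whenever $f(x)=A/2^k$, with $C$ depending only on $m$ and the Lipschitz constant; choosing $t_0$ with $C\sqrt{t_0}\le 1/2$ and invoking the strong Markov property, infinitely many of the increments $T_{A/2^{k+1}}-T_{A/2^k}$ exceed $t_0$ a.s., so $T_0=\infty$. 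This is shorter and more elementary---no It\^o formula, no exponential martingale inequality, no $\delta$-net---and it handles all of $\Lambda$ at once because $f$ serves as a single scalar substitute for $\mathrm{dist}(\cdot,\Lambda)$, sidestepping precisely the roughness issue you identify. Your approach, on the other hand, produces the quantitative, target-uniform Gaussian bound $\bP\bigl[\inf_{t\le T}|X_t-a|\le\delta\bigr]\le\exp\bigl(-c(\log(1/\delta))^2\bigr)$, which is a stronger intermediate statement and could be useful when explicit hitting estimates are wanted; the price is the extra chaining step needed to pass from individual points $a\in\Lambda$ to the full set.
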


\begin{proof}
We use the Frobenius norm $\|M\| := \sqrt{\Tr(M^* M)}$ for a matrix, $M$.
For $A > 0$, define the stopping time
\[
T_A
:=
\inf \bigl\{ t \ge 0 \st \|\sigma(X_t)\|^2 + \|b(X_t)\|^2 = A\bigr\}.
\]
Fix $A > 0$.
For $k \in \N^+$, write
\[
S_k := T_{A/2^{k+1}} \wedge T_{A/2^{k-1}}.
\]
If $x$ is such that $\|\sigma(x)\|^2 + \|b(x)\|^2 = A/2^k$, then $\forall t
\ge 0$
\begin{align*}
\Esubbig x{\|x - X_{t \wedge S_k}\|^2 &\III{S_k \le 1}}
\le
(m+1) \E_x\mkern-1.5mu\biggl[\sum_{i=1}^d \sum_{j=1}^m
\Bigl(\int_0^{t \wedge S_k} \sigma(X_u)_{i, j} \,\dd B_u^{(j)}\Bigr)^2
%\\ &\hskip1in + \sum_{i=1}^d
\\ \shortintertext{$ \displaystyle \hfill {}+ \sum_{i=1}^d
\Bigl(\int_0^{t \wedge S_k} b(X_u)_i \,\dd u\Bigr)^2 \III{S_k \le 1}\biggr]$}
&\le
(m+1) \Esubbigg x{\int_0^{t \wedge S_k} \|\sigma(X_u)\|^2 \,\dd u}
   + (m+1)\Esubbigg x{\int_0^{t \wedge S_k} \|b(X_u)\|^2 \,\dd u}
\\ &\le
(m+1) \cdot \frac{A}{2^{k-1}} \cdot t
\end{align*}
and
\begin{align*}
\EsubBig x{&\bigl| \|\sigma(x)\|^2 + \|b(x)\|^2 - \|\sigma(X_{t \wedge
S_k})\|^2 - \|b(X_{t \wedge S_k})\|^2 \bigr| \st S_k \le 1}
\\ &\le
\E_x\mkern-1.5mu\Bigl[\|\sigma(x) + \sigma(X_{t \wedge S_k})\| \cdot
\|\sigma(x) - \sigma(X_{t \wedge S_k})\| 
%\\ &\hskip1in +
\\ \shortintertext{$ \displaystyle \hfill {}+ 
\|b(x) + b(X_{t \wedge S_k})\| \cdot
\|b(x) - b(X_{t \wedge S_k})\| \st S_k \le 1\Bigr]$}
&\le
\EsubBig x{\bigl(\|\sigma(x)\| + \|\sigma(X_{t \wedge S_k})\| +
\|b(x)\| + \|b(X_{t \wedge S_k})\|\bigr) \cdot K \cdot
\|x - X_{t \wedge S_k}\| \st S_k \le 1}
\\ &\le
2 \cdot \Bigl(\frac{A + 2A}{2^k}\Bigr)^{1/2} \cdot K \cdot 
\Esubbig x{\|x - X_{t \wedge S_k}\|^2 \st S_k \le 1}^{1/2},
\end{align*}
where $K$ is a bound for the Lipschitz constants.
If, in addition, $t \le 1$ and $S_k \le t$, then
\[
\bigl|\|\sigma(x)\|^2 + \|b(x)\|^2 - \|\sigma(X_{t \wedge S_k})\|^2 - \|b(X_{t
\wedge S_k})\|^2 \bigr| \III{S_k \le 1} 
\ge
\frac{A}{2^{k+1}}.
\]
Putting these inequalities together, we obtain
$\forall{t \le 1}$
\[
\Psub x[S_k \le t]
\le
\frac{2^{k+1}}{A} \cdot 2\Bigl(\frac{3A}{2^k}\Bigr)^{1/2} \cdot K \cdot
\sqrt{(m+1) \cdot \frac{A}{2^{k-1}} \cdot t}
%(m+1) \Bigl(\frac{A}{2^{k-1}}\Bigr)^{1/2} \sqrt t
=
C \sqrt t
\]
for some constant, $C$, depending only on $m$ and $K$.

Choose $t_0 \in (0, 1)$ so that $C \sqrt{t_0} \le 1/2$. Then by the strong
Markov property, if
$k \in \N^+$, $\>A > 0$, and $x \in \R^d$,
\[
 \|\sigma(x)\|^2 + \|b(x)\|^2 \ge A/2^k \quad \Longrightarrow \quad
\Psub x[T_{A/2^{k+1}} \ge t_0] \ge 1/2.
\]
Given $x \notin \Lambda$,
choose $A := \|\sigma(x)\|^2 + \|b(x)\|^2$ and express the
time to reach $\Lambda$ as $\sum_{k \ge 0} \bigl(T_{A/2^{k+1}} -
T_{A/2^{k}}\bigr)$. By
the strong Markov property, infinitely many of these terms are at least
$t_0$ a.s., whence the total time is infinite a.s.
\end{proof}

\textbf{Acknowledgment.} We are grateful to
Jean-Fran\c cois Le Gall for showing us a
similar idea in a different context.

%%%%%%%%%%%%%%%%%%%%%%%%%%%%%%%%%%%%%%%%%%%%%%%%%%%%%%%%%%%%%%%%%%%
%%                                                               %%
%% Use the two commands below for producing your bibliography    %%
%% with bibtex, then comment again the commands and include the  %%
%% content of the .bbl file in this file below the commands.     %%
%%                                                               %%
%%%%%%%%%%%%%%%%%%%%%%%%%%%%%%%%%%%%%%%%%%%%%%%%%%%%%%%%%%%%%%%%%%%

%\bibliographystyle{/usr/local/texlive/2015/texmf-dist/bibtex/bst/amscls/amsplain}
%\bibliography{\jobname}

% add below the content of your .bbl file produced by bibtex.

\providecommand{\bysame}{\leavevmode\hbox to3em{\hrulefill}\thinspace}
\providecommand{\MR}{\relax\ifhmode\unskip\space\fi MR }
% \MRhref is called by the amsart/book/proc definition of \MR.
\providecommand{\MRhref}[2]{%
  \href{http://www.ams.org/mathscinet-getitem?mr=#1}{#2}
}
\providecommand{\href}[2]{#2}

%\begin{thebibliography}{99}

%\end{thebibliography}

\end{document}